\newcommand\reallywidehat[1]{%
\savestack{\tmpbox}{\stretchto{%
  \scaleto{%
    \scalerel*[\widthof{\ensuremath{#1}}]{\kern-.6pt\bigwedge\kern-.6pt}%
    {\rule[-\textheight/2]{1ex}{\textheight}}
  }{\textheight}%
}{0.5ex}}%
\stackon[1pt]{#1}{\tmpbox}%
}
\newtheorem{theorem}{Theorem}[section]
\newtheorem{lemma}[theorem]{Lemma}
\theoremstyle{definition}
\newtheorem{definition}[theorem]{Definition}
\newtheorem{prop}[theorem]{Proposition}
\theoremstyle{remark}
\newtheorem{remark}[theorem]{Remark}
\numberwithin{equation}{section}
\begin{document}

\title[time singularities for NSE]{On the possible time singularities for the 3D Navier-Stokes equations}


\author{Xiaoyutao Luo}
\address{Department of Mathematics, Statistics and Computer Science,
University of Illinois At Chicago, Chicago, Illinois 60607}
\email{xluo24@uic.edu}


\date{\today}
\begin{abstract}
We prove a local-in-time regularity criterion for the 3D Navier-Stokes equations. In particular, it follows from the criterion that the Hausdorff dimension of possible singular times of Leray-Hopf weak solutions $u\in L^r_t B^\alpha_{s,\infty}$ for some $\alpha>0$, $ s > 3$ and $r> 2 $ is less than $\frac{r}{2}(\frac{3}{s} + \frac{2}{r} -\alpha-1 )$. The main contribution is that we do not assume the suitability of weak solutions.
\end{abstract}

\maketitle
\section{Introduction}
Consider the 3D Navier-Stokes equations in the whole space
\begin{equation}\label{3D_NSE}
\begin{aligned}
\frac{du}{dt}  + (u\cdot \nabla)u - \Delta u & = -\nabla p\\
\nabla \cdot u  =0  ,
\end{aligned}
\end{equation}
where $u$ is the unknown vector field that describes velocity of the flow and $p$ is the scalar function that stands for the pressure of the fluid. The problem is supplemented by divergence free data $u_0 \in L^2(\mathbb{R})^3$.

Even though weak solutions have been constructed via various methods, the global regularity of \eqref{3D_NSE} remains open. Extensive studies of global regularity had been initiated but only conditional or partial results are available. For example, if $u\in L^s_tL^p_x$ for some $\frac{2}{r}+\frac{3}{s} \leq 1$ $s >3$ then the solution is regular \cite{LERAY}. There is a long history of improvements of this conditional regularity result. The limit case above $s=3$ is solved by Escauriaza, Seregin and \v{S}ver\'ak in \cite{ESS}. Note that their result is actually local and we will talk about this below. 

Since it is very difficult to prove the regularity of weak solutions, the theory of partial regularity of the solutions of \eqref{3D_NSE} arises, which focuses on estimating the size of the singular set in space and time. There is locality nature in this matter and instead of proving regularity in the whole $(0,T)\times \mathbb{R}^3$ local regularity results or criteria are considered. 

\begin{definition}
Given a weak solution $u$ of \eqref{3D_NSE}, the singular set $\mathcal{S}(u) \subset \mathbb{R}^+ \times \mathbb{R}^3$ is the set in which $u(x,t)$ is not locally bounded.
\end{definition}

Due to the parabolic nature of \eqref{3D_NSE}, it is natural to consider local regularity on parabolic cylinder $Q_r(x,t) = B_r(x) \times [t-r^2,t]$. At first glance $L^\infty$ does not seem to be very regular, but this definition makes sense due to the classical result of Serrin\cite{SERR} in which he proved that if $u\in L^r_tL^s_x(Q_r)$ for $\frac{2}{s}+\frac{3}{s} <1 $ then $\partial^k_x u(x,t) \in C^\alpha(Q_\frac{r}{2})$ for some $\alpha>0$ and any $k>0$. Later this result was improved by Struwe \cite{STRU} requiring only $\frac{2}{r}+\frac{3}{s} =1$ for $s <\infty$ and extended to $s=\infty$ in \cite{ESS}.

The following partial regularity results are known. The set of singular times, the projection of $\mathcal{S}(u)$ have zero $\frac{1}{2}$-Hausdorff measure $\mathcal{H}^\frac{1}{2}( \Pi_t\mathcal{S}(u) )=0$ for weak solutions satisfying the energy inequality including Leray-Hopf weak solutions, see for example \cite{SCHE}. The energy inequality 
$$
\|u(t) \|_2^2 +2\int_{t_0}^t \|\nabla u(s) \|_2^2 ds \leq \|u(0) \|_2^2
$$
for a.e $t_0$ and all $t>t_0$ is crucial here because it guarantees the uniqueness of strong solution in the class of Leray-Hopf weak solutions. This result dates back to Leray but it was implicit there. The latest attention in this field was brought to us by Scheffer, which leads to the well-known theorem of Caffarelli, Kohn and Nirenberg \cite{CKN}, the best partial regularity result so far. After introducing the notion of suitable weak solutions that satisfy the local energy inequality, they prove that the $1$-dimensional parabolic Hausdorff measure of $\mathcal{S}(u)$ is zero. The solutions constructed by Leray are suitable but the suitability Leray-Hopf weak solutions constructed by Galerkin approximation is unknown. Without the help of the local energy inequality it is extremely difficult to establish any local space-time regularity result. 

Before diving into the discussion of main results in this paper, let us briefly explain the issue of supercritically. The 3D Navier-Stokes equations are known to be supercritical, which means available \textit{a priori} bounds are not strong enough to control the higher norms of the solution. In order to guarantee regularity, one usually needs to impose some kind of condition that is subcritical or critical with respect to \eqref{3D_NSE}. Current techniques are not very effective in dealing with supercritical equations. The best possible result so far can only beat criticality by a logarithmic amount. See for instance \cite{BV,FJNZ,TAOGR}. 

Since there is little hope to overcome the supercriticality, we try to bridge the two ends of conditional regularity and partial regularity together. More precisely, we examine the following question: if we assume $u \in L^r_tL^s_x$ for some $\frac{3}{2}>\frac{2}{r}+\frac{3}{s} >1$ can we get a better bound on the dimension of time singularities? Notice by interpolation $\frac{2}{r}+\frac{3}{s}=\frac{3}{2}$ is satisfied for any weak solution.

In fact, as a direct consequence of a partial regularity result of Gustafson, Kang, and Tsai \cite{GKT}, for suitable weak solutions in the class $L^r_t L^s_x$, the set of possible singular times has the bound $\mathcal{H}^{\frac{r}{2}(\frac{3}{s} + \frac{2}{r} -1 )}(\Pi_t \mathcal{S}(u) ) =0$.\footnote{In fact, $\mathcal{P}^{r(\frac{3}{s} + \frac{2}{r} -1 )}(\mathcal{S}(u) ) =0$ where $\mathcal{P}^{s} $ is the $s$-dimensional parabolic measure.} Notice that when the parameters $\frac{3}{s} + \frac{1}{ r}< 1 $, the dimension $\frac{r}{2}(\frac{3}{s} + \frac{2}{r} -1 ) <  \frac{1}{2}$ improving the classical bound $ \frac{1}{2}$. However, the use of \textit{local energy inequality} is crucial for such types of  local regularity theory.

In this paper, we extend the result of \cite{GKT} on the Hausdorff dimension of the set of possible singular times. We consider Leray-Hopf weak solutions $u\in L^r_t B^\alpha_{s,\infty}$, $\alpha>0$, $ s > 3$ and $r> 2 $ without assuming the suitability of solutions. Our main results are as follows.

\begin{theorem}\label{main_thm}
Let $u $ be a Leray-Hopf weak solution of \eqref{3D_NSE}. Suppose $u\in L^r_t B^\alpha_{s,\infty}$ for some $\alpha>0$, $ s > 3$ and $r> 2 $, then for possible singular times we have 
$$
\mathcal{H}^{r(\frac{3}{s} + \frac{2}{r} -\alpha-1 )}(\mathcal{S}_T) =0.
$$
\end{theorem}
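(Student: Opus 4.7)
The plan is to split the argument into two parts. First, prove a local-in-time regularity criterion for Leray-Hopf solutions that sidesteps any local energy inequality (hence no suitability is used). Second, combine the criterion with a Vitali covering argument to bound the Hausdorff measure of $\mathcal{S}_T$.

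For the criterion, a.e.\ time $t_0$ is a regular epoch of the Leray-Hopf solution in the sense that $u(t_0)\in H^1$. At such a $t_0$, I would launch the strong solution $v$ with initial datum $u(t_0)$ on its maximal interval $[t_0,T^*)$ and study the $H^1$ energy
\begin{equation*}
\tfrac{d}{dt}\|\nabla v\|_2^2 + 2\|\Delta v\|_2^2 = -2\int (v\cdot\nabla v)\cdot \Delta v\,dx.
\end{equation*}
The nonlinear term should be controlled by a paraproduct/interpolation estimate of the schematic form
\begin{equation*}
\Bigl|\int (v\cdot\nabla v)\cdot\Delta v\,dx\Bigr| \leq \tfrac{1}{2}\|\Delta v\|_2^2 + C\,\|v\|_{B^\alpha_{s,\infty}}^r\|\nabla v\|_2^2,
\end{equation*}
the exponent $r$ being dictated by matching the Besov scaling against the two derivatives on the left. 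Gronwall then yields $\|\nabla v(t)\|_2^2 \leq \|\nabla v(t_0)\|_2^2 \exp\bigl(C\int_{t_0}^{t}\|v\|_{B^\alpha_{s,\infty}}^r\,ds\bigr)$. Averaging the Leray energy inequality over a short window $(t_0-\lambda^2,t_0)$ selects a good time $t_1$ with $\|\nabla u(t_1)\|_2\lesssim \lambda^{-1}\sqrt{E}$, and weak-strong uniqueness identifies $v$ with $u$ on $[t_1,\min(T^*,t_0)]$. Tracking the size of $\|\nabla u(t_1)\|_2$ through the Gronwall bound produces a criterion of the form: there exists $\varepsilon_0>0$ such that whenever
\begin{equation*}
\int_{t_0-\lambda^2}^{t_0}\|u(t)\|_{B^\alpha_{s,\infty}}^r\,dt \leq \varepsilon_0\,\lambda^{\,2r(\tfrac{3}{s}+\tfrac{2}{r}-\alpha-1)}
\end{equation*}
holds for some small $\lambda>0$, the time $t_0$ is regular.

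To extract the Hausdorff bound, set $d:=r(\tfrac{3}{s}+\tfrac{2}{r}-\alpha-1)$ and restrict to the nontrivial case $0<d<1$. By contrapositive, at every $t_0\in\mathcal{S}_T$ and every small $\lambda$ the integral above exceeds $\varepsilon_0\lambda^{2d}$. Given $\delta>0$, a Vitali cover of $\mathcal{S}_T$ produces essentially disjoint intervals $I_i=[t_i-\lambda_i^2,t_i+\lambda_i^2]$ with $\lambda_i<\sqrt{\delta}$, and summing the lower bound yields
\begin{equation*}
\sum_i \lambda_i^{2d} \leq \varepsilon_0^{-1}\int_{\bigcup_i I_i}\|u\|_{B^\alpha_{s,\infty}}^r\,dt.
\end{equation*}
Since $d<1$ forces $\bigl|\bigcup_i I_i\bigr|\to 0$ as $\delta\to 0$, absolute continuity of the Besov integral drives the right-hand side to zero; combined with $\sum_i(\mathrm{diam}\,I_i)^d \lesssim \sum_i \lambda_i^{2d}$ this delivers $\mathcal{H}^d(\mathcal{S}_T)=0$.

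The main difficulty is the first step: absent suitability there is no local energy inequality, so the trilinear estimate must be performed globally in space, and the strong-solution/weak-strong-uniqueness machinery is what allows the result to be transferred back to the Leray-Hopf weak solution. Producing a Besov-space nonlinear estimate sharp enough to deliver the exponents $(r,s,\alpha)$ of the theorem, together with the good-time selection that converts ``$u(t_0)\in H^1$ for a.e.~$t_0$'' into a usable initial condition, is the technical heart of the proof. The factor-of-two loss in the exponent $d$ relative to the suitable-solution bound of Gustafson, Kang, and Tsai is precisely the cost of substituting an energy average over a small time window for the local energy inequality.
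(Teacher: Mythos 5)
The covering half of your argument is fine and essentially coincides with the paper's Lemma \ref{fratals_singualr_set} (Vitali covering, disjoint intervals of length $\lambda_i^2$, summing the lower bound, absolute continuity). The gap is in the regularity criterion, which is the heart of the matter. The trilinear estimate you posit,
$\bigl|\int (v\cdot\nabla v)\cdot\Delta v\,dx\bigr| \leq \tfrac{1}{2}\|\Delta v\|_2^2 + C\|v\|_{B^\alpha_{s,\infty}}^r\|\nabla v\|_2^2$,
cannot hold with the hypothesis' exponent $r$. Absorbing the full $\|\Delta v\|_2^2$ and keeping $\|\nabla v\|_2^2$ to first power forces, by scaling/interpolation, the exponent $\rho$ with $\tfrac{2}{\rho}+\tfrac{3}{s}=1+\alpha$ (the Ladyzhenskaya--Prodi--Serrin exponent adjusted for the Besov regularity $\alpha$). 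The theorem only has content in the supercritical regime $\tfrac{3}{s}+\tfrac{2}{r}-\alpha-1>0$, which is exactly $r<\rho$; there the claimed inequality fails on rescaled profiles $v_\mu(x)=\mu v(\mu x)$, and H\"older in time goes the wrong way if you try to feed $\int\|v\|^{\rho}$ from $\int\|v\|^{r}$. Consequently the Gronwall bound does not close, and it must close for your argument to work: the good time $t_1$ from averaging the energy inequality only gives $\|\nabla u(t_1)\|_2\lesssim \lambda^{-1}\sqrt{E}$, whose bare $H^1$ local existence time is $\sim\lambda^{4}$, far short of the window $\lambda^{2}$, so reaching $t_0$ relies entirely on the a priori bound you cannot prove. (If this step worked, it would in fact show that $u\in L^r_tB^\alpha_{s,\infty}$ alone, a supercritical condition, implies regularity everywhere.)

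The paper resolves this differently, and the difference is structural rather than technical. Its regularity criterion (Theorem \ref{main_thm_crite} / Theorem \ref{step2_theorem}) is a \emph{critical}, scale-invariant smallness condition on the high-frequency tail, $\lambda_p^{r(\frac{3}{s}+\frac{2}{r}-1)}\int_{I_p}\sum_{q\geq p-2}\|u_q\|_s^r\,dt\leq\delta^r$ --- note that $\alpha$ does not appear --- proved by a frequency-localized differential inequality for $\|u_q\|_s$ (Proposition \ref{blackbox}) together with a continuity/index-set argument, not by an $H^1$ energy estimate. The supercritical information $u\in L^r_tB^\alpha_{s,\infty}$ is used \emph{only} in the covering lemma, via Jensen's inequality, to show that the set where the critical smallness fails is small. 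You have instead tried to bake $\alpha$ into the regularity criterion itself, which runs head-on into the supercriticality obstruction the introduction warns about. (A minor further point: your route, even if repaired, yields exponent $r(\tfrac{3}{s}+\tfrac{2}{r}-\alpha-1)$, which matches the theorem as literally printed but is weaker than the $\tfrac{r}{2}(\tfrac{3}{s}+\tfrac{2}{r}-\alpha-1)$ obtained in the abstract and Lemma \ref{fratals_singualr_set}; the factor $\tfrac{1}{2}$ comes from measuring the threshold against intervals of length $\lambda_p^{-2}$ at the critical, rather than $\alpha$-shifted, scaling.)
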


Theorem \ref{main_thm} is an application of the following local-in-time regularity criterion in terms of Besov norm. To the author's best knowledge, it is the first result of such type.
\begin{theorem}\label{main_thm_crite}
For any $ s > 3$ and $r> 2 $, there exist a constant $\delta >0$ with the following property: if a Leray-Hopf weak solution satisfies
$$
\limsup_{p \to \infty} \lambda_p^{ r( \frac{3}{s} + \frac{2}{r}  -1 )}  \int_{t_0-\lambda_p^{-2}}^{t_0} \sum_{q \geq p-2}  \|u_q \|_s^r dt \leq \delta^r   ,
$$
then there exists $\epsilon>0$ such that 
$$
\sup_{[t_0 -\epsilon,t_0]}  \|u  \|_s < \infty.
$$
\end{theorem}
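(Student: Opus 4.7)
\emph{Strategy.} The plan is to prove Theorem \ref{main_thm_crite} through a frequency-localized mild-solution bootstrap, avoiding any appeal to a local energy inequality. The hypothesis supplies a scale-invariant smallness of the high-frequency content of $u$ at the parabolic scale $\lambda_p^{-2}$, which matches exactly the heat semigroup smoothing scale. First I would fix $p_0$ large enough that the tail quantity in the hypothesis is bounded by $(2\delta)^r$ for all $p\geq p_0$. Decompose $u = S_{p_0}u + u_{\geq p_0}$. The low-frequency piece is controlled by Bernstein together with the global Leray-Hopf bound $u\in L^\infty_t L^2$: $\|S_{p_0}u\|_{L^\infty_t L^s}\lesssim \lambda_{p_0}^{3/2-3/s}\|u\|_{L^\infty_t L^2}<\infty$. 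So it suffices to control $u_{\geq p_0}$ in $L^\infty_t L^s$ on some short interval $[t_0-\epsilon,t_0]$.

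\emph{Key estimates.} For each dyadic block $u_q$ with $q\geq p_0$, write the Duhamel formula on the parabolic cylinder $[t_q,t_0]$ with $t_q:=t_0-\lambda_q^{-2}$:
\[
u_q(t) = e^{(t-t_q)\Delta}u_q(t_q) - \int_{t_q}^t e^{(t-\tau)\Delta}\Delta_q\mathbb{P}\nabla\cdot(u\otimes u)(\tau)\,d\tau.
\]
The linear contribution at initial time $t_q$ is bounded via Bernstein and the Leray-Hopf energy. For the nonlinear term I would combine the frequency-localized heat-kernel estimate $\|e^{\sigma\Delta}\Delta_q\nabla f\|_s\lesssim \lambda_q e^{-c\lambda_q^2\sigma}\|\Delta_q f\|_s$ with Hölder in time (using $r>2$) and Bony's paraproduct decomposition. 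The essential structural observation is that $\Delta_q(u\otimes u)$ only sees contributions involving at least one factor $u_{q'}$ with $q'\geq q-N$, and those factors are exactly what the hypothesis controls. Careful bookkeeping of $\lambda_q$-powers and summation in $q$ should yield an inequality of the form
\[
\sup_{t\in[t_0-\epsilon,t_0]}\sum_{q\geq p_0}\|u_q(t)\|_s \;\leq\; C_0 + C\delta\sup_{t\in[t_0-\epsilon,t_0]}\sum_{q\geq p_0}\|u_q(t)\|_s,
\]
where $C_0$ collects the linear and low-frequency contributions. Smallness of $\delta$ absorbs the quadratic term and closes the bootstrap, delivering $\sup_{[t_0-\epsilon,t_0]}\|u\|_s<\infty$.

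\emph{Main obstacle.} The principal difficulty lies in the low-high paraproduct $T_{S_{q-N}u}\,u_q$: the low-frequency factor $S_{q-N}u$ is not controlled by the smallness hypothesis and must instead be handled via the global Leray-Hopf energy bound together with Bernstein, trading powers of $\lambda_q$ for $L^s$ control. This produces a contribution that is linear in $u_{\geq p_0}$, which I would absorb either by a Grönwall argument on the short interval $[t_0-\epsilon,t_0]$ or by redefining the bootstrap quantity to include a suitably weighted time integral. A secondary subtlety is the parabolic scaling bookkeeping: the strict inequalities $s>3$ and $r>2$ guarantee that the temporal integration of the heat-kernel factor is absolutely convergent and produces precisely the dimensional exponent $r(\tfrac{3}{s}+\tfrac{2}{r}-1)$ matching the hypothesis; the endpoints are excluded for this reason. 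Finally, since suitability is not assumed, every step that would normally invoke the local energy inequality must be replaced by a frequency-localized semigroup estimate of the type above, and it is this replacement -- rather than any delicate new inequality -- that I expect to be the conceptual heart of the argument.
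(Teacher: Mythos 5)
Your overall strategy (frequency-localized Duhamel/Gr\"onwall on each block $u_q$, Bony decomposition, Bernstein for the low modes, and absorption of the quadratic term by smallness of $\delta$) is in the same spirit as the paper, which runs a Gr\"onwall argument on the differential inequality $\frac{d}{dt}\|u_q\|_s + c\lambda_q^2\|u_q\|_s \lesssim \sum_{p\le q}\lambda_p^{3/s}\|u_p\|_s\sum_{|p-q|\le2}\lambda_p\|u_p\|_s + \lambda_q^{3/s+1}\sum_{p\ge q-2}\|u_p\|_s^2$. However, two of your key steps fail as stated.

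First, the linear term. You propose to bound $e^{(t-t_q)\Delta}u_q(t_q)$ by ``Bernstein and the Leray-Hopf energy.'' This gives $\|u_q(t_q)\|_s\lesssim \lambda_q^{3/2-3/s}\|u_0\|_2$, and since $3/2-3/s>1/2>0$ for $s>3$ this grows in $q$; moreover the semigroup factor $e^{-c\lambda_q^2(t-t_q)}$ is only $O(1)$ on the relevant window $t-t_q\sim\lambda_q^{-2}$, so neither the individual blocks nor their sum are controlled, and your constant $C_0$ is infinite. The initial times must instead be \emph{selected} using the hypothesis: by the mean value theorem in time applied to $\int_{I_p}\sum_{q\ge p-2}\|u_q\|_s^r\,dt\le 2\delta^r\lambda_p^{-r(3/s+2/r-1)}$ one finds $\tau_p\in I_p(t_0)$ with $\sum_{q\ge p-2}\|u_q(\tau_p)\|_s^r\lesssim \delta^r\lambda_p^{r(1-3/s)}$, and the Duhamel/Gr\"onwall step must be run from $\tau_p$. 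Second, the low-high paraproduct. You propose to control $S_{q-N}u$ entirely by energy plus Bernstein, but $\sum_{p'\le q}\lambda_{p'}^{3/s}\|u_{p'}\|_s\lesssim\sum_{p'\le q}\lambda_{p'}^{3/2}\|u_0\|_2\sim\lambda_q^{3/2}\|u_0\|_2$ is far too large to be absorbed; energy only suffices for the finitely many modes $p'\le p_0$. For the intermediate range $p_0\le p'\le q$ one needs a genuinely new input, namely the \emph{critical regularity} estimate $\sup_{I_{p'}(t_0)}\|u_{p'}\|_s\lesssim\delta\lambda_{p'}^{1-3/s}$, which must itself be derived from the hypothesis in a preliminary step (this is the paper's Proposition 3.5, obtained by integrating the block differential inequality between a pigeonholed time and the time where $\|u_p\|_s$ attains its supremum). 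Without this intermediate step your claimed closed inequality $X\le C_0+C\delta X$ cannot be obtained: the time-integrated smallness of the hypothesis does not directly furnish the pointwise-in-time factor $\delta$ multiplying $X$. A final, more minor point: the bound you close on a single block interval $[t_q,t_0]$ (or $[\tau_p,t_0]$) must be propagated up to $t_0$ by a continuity/iteration argument using local well-posedness in $L^s$, $s>3$, since a priori the bootstrap bound is only known on a subinterval.
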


\begin{remark}
As Theorem \ref{main_thm} does not assume \textit{local energy inequality}, it works for Leray-Hopf weak solutions that are not necessarily suitable, which is the main relaxation comparing with the result of Gustafson, Kang, and Tsai \cite{GKT}.
\end{remark}

When $s=r=\frac{10}{3}$ the space $L^{\frac{10}{3}} $ is an interpolation of the energy spaces, for which we obtain a new criterion at the first time of blowup for smooth solutions
\begin{theorem}\label{thm:103}
Suppose $u$ is a Leray-Hopf weak solution on $[0,T)$ with smooth initial data where $T$ is the first time of possible blowup. Then $u$ is regular on $[0,T]$ if u satisfies
$$
\limsup_{p \to \infty} \lambda_{p }^\frac{5}{3}\int_{T - \lambda_p^{-2} }^T \|u(t) \|_{\frac{10}{3}}^{\frac{10}{3}} dt \leq \delta^*
$$
where $\delta^*>0$ is a universal constant.
\end{theorem}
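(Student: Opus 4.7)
The plan is to obtain Theorem \ref{thm:103} as a direct corollary of Theorem \ref{main_thm_crite} applied at $s=r=\tfrac{10}{3}$ and $t_0=T$. First I would check the numerology: with these parameters, $s=\tfrac{10}{3}>3$ and $r=\tfrac{10}{3}>2$, and the exponent on $\lambda_p$ becomes
$$
r\Bigl(\tfrac{3}{s}+\tfrac{2}{r}-1\Bigr) \;=\; \tfrac{10}{3}\Bigl(\tfrac{9}{10}+\tfrac{6}{10}-1\Bigr) \;=\; \tfrac{10}{3}\cdot\tfrac{1}{2} \;=\; \tfrac{5}{3},
$$
which is exactly the power of $\lambda_p$ appearing in the hypothesis of Theorem \ref{thm:103}. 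So the two limsup conditions are of matching scaling.

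The only substantive step is then to replace the Littlewood--Paley sum $\sum_{q\geq p-2}\|u_q\|_{10/3}^{10/3}$ appearing in Theorem \ref{main_thm_crite} by the Lebesgue norm $\|u\|_{10/3}^{10/3}$ in Theorem \ref{thm:103}. Since $\tfrac{10}{3}\geq 2$, the inclusion $\ell^2\hookrightarrow\ell^{10/3}$ gives pointwise
$$
\sum_{q} |u_q(x)|^{10/3} \;\leq\; \Bigl(\sum_{q} |u_q(x)|^2\Bigr)^{5/3},
$$
and integrating together with the Littlewood--Paley square-function characterization $\|u\|_{10/3}\approx\bigl\|(\sum_q|u_q|^2)^{1/2}\bigr\|_{10/3}$ yields $\sum_{q}\|u_q\|_{10/3}^{10/3}\lesssim \|u\|_{10/3}^{10/3}$ with a universal constant $C$. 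Choosing $\delta^\ast\leq \delta^{10/3}/C$, where $\delta$ is the constant supplied by Theorem \ref{main_thm_crite} for $s=r=\tfrac{10}{3}$, the hypothesis of Theorem \ref{thm:103} therefore implies the hypothesis of Theorem \ref{main_thm_crite} at $t_0=T$.

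Applying Theorem \ref{main_thm_crite} then produces an $\epsilon>0$ with $u\in L^\infty\bigl([T-\epsilon,T];L^{10/3}\bigr)$. Since $\tfrac{2}{\infty}+\tfrac{3}{10/3}=\tfrac{9}{10}<1$, this $L^\infty_tL^{10/3}_x$ bound is strictly subcritical in the Ladyzhenskaya--Prodi--Serrin scale, so the Serrin/Struwe criterion upgrades it to full regularity on $[T-\epsilon,T]$. Combined with the smoothness of $u$ on $[0,T)$ (which holds by the assumption that $T$ is the first possible blowup time), we conclude regularity on the closed interval $[0,T]$, which is the claim.

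There is no real obstacle beyond what is already contained in Theorem \ref{main_thm_crite}; the only nontrivial input is the Littlewood--Paley comparison valid precisely because $s\geq 2$, and the final bootstrap from an $L^\infty_tL^{10/3}_x$ bound to smoothness is classical. The mild care needed is in propagating the universal constant from the Littlewood--Paley step into the definition of $\delta^\ast$, and in noting that smoothness of the data ensures we do not need to worry about the low frequencies or about applying the criterion on small parabolic cylinders.
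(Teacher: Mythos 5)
Your proposal is correct and follows essentially the same route as the paper, which deduces Theorem \ref{thm:103} from Theorem \ref{main_thm_crite} with $s=r=\tfrac{10}{3}$ via the embedding $L^{10/3}\subset B^0_{10/3,10/3}$; your square-function argument with $\ell^2\hookrightarrow\ell^{10/3}$ is just the standard proof of that embedding, which the paper cites without proof. The numerology check and the final Serrin bootstrap also match what the paper does.
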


\begin{remark}
Note that any weak solution verifies $\int_{T - \lambda_p^{-2} }^T \|u(t) \|_{\frac{10}{3}}^{\frac{10}{3}} dt \to 0 $ as $p \to \infty$, whereas our condition requires $\int_{T - \lambda_p^{-2} }^T \|u(t) \|_{\frac{10}{3}}^{\frac{10}{3}} dt \leq O(\lambda_{p }^{-\frac{5}{3}}) $
\end{remark}

The paper is organized as follows. In Section 2 we state some preliminaries on properties of weak solutions and the Littlewood-Paley theory. Section 3 is devoted to two main propositions that imply Theorem \ref{main_thm_crite}. We formulate our estimate there using the Besov spaces for optimal results although our argument does not rely on the theory of the Besov spaces. Finally with all ingredients in hand we prove main theorems in Section 4. 

\subsection*{Acknowledgments}
The author would like to express sincere gratitude to his advisor Professor Alexey Cheskidov for proofreading early drafts and giving many suggestions for improvement. The author also acknowledges the partial support form the NSF grant DMS--1517583.

\section{Preliminaries}
\subsection{Notations}
We denote by $A \lesssim B$ an estimate of the form $A \leq CB $ with some
absolute constant $C$, and by $A \sim B$ an estimate of the form $C_1 B \leq  A \leq  C_2 B $ with some absolute constants $C_1$, $C_2$. For any $1\leq p \leq \infty $ we write $\| \cdot \|_p= \|\cdot \|_{L^p} $ for Lebesgue norms. The symbol $(\cdot,\cdot)$ stands for the $L^2$-inner product. For any $p\in \mathbb{N}$ and $t>0$ we let $\lambda_p=2^p$ be the standard dyadic number and $I_p(t)=[t-\lambda_p^{-2},t]$ be the dyadic time interval.

\subsection{Weak solutions}
\begin{definition}
A weak solution to \eqref{3D_NSE} on $[0,T]$(or $(0,\infty)$) with divergence-free initial data $u_0 \in L^2(\mathbb{R})^3$ is a function $u \in C_w(0,T;L^2(\mathbb{R}^3)) \cap L^2(0,T;H^1(\mathbb{R}^3))$ satisfying 
\begin{equation}\label{weak_formulation}
(u(t),\phi(t)) - (u_0,\phi(0))= \int_0^t (u(s),\partial_s \phi(s))+ (\nabla u(s),\nabla\phi(s))+(u(s)\cdot \nabla u(s),\phi(s)) ds,
\end{equation}
$\nabla u(t) =0$ in the sense of distribution for all $t \in [0,T]$ and all divergence-free test functions $\phi \in C^\infty_0([0,T]\times \mathbb{R}^3)$.
\end{definition}

A weak solution that satisfies the energy inequality
\begin{equation}
\|u(t) \|_2^2 +2\int_{t_0}^t \|\nabla u \|_2^2 \leq \| u(t_0)\|_2^2,
\end{equation}
for almost all $t_0 \in (0,T)$ and all $t \in (t_0,T]$ is called a Leray-Hopf weak solution. A major difference between general weak solutions and Leray-Hopf solutions is the weak-strong uniqueness, namely strong solution is unique in the class of Leray-Hopf weak solutions. With this property we only need to consider blowup from the left.

\begin{theorem}[Leray]\label{Leray}
Let $u$ be a Leray-Hopf weak solution of \eqref{3D_NSE}. If $u$ is regular on $[\alpha,\beta)$ and
\begin{equation}
\limsup_{t\to \beta -}\| u(t) \|_p < \infty \quad \text{for some $p>3$}
\end{equation}
then u is regular on $[\alpha,\beta+\epsilon]$ for some small $\epsilon$.
\end{theorem}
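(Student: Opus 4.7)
The plan is to combine a classical $L^p$ local existence theorem with weak-strong uniqueness. The hypothesis $\limsup_{t\to\beta-}\|u(t)\|_p < \infty$ yields a constant $M$ and some $\eta>0$ such that $\|u(t)\|_p \leq M$ for all $t \in (\beta-\eta,\beta)$; since $u$ is assumed regular on $[\alpha,\beta)$ we already have $u(t) \in L^p \cap H^1$ pointwise on this interval, so in particular $u(t_0)$ is an admissible initial datum for any $t_0 \in (\beta-\eta,\beta)$.

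Next, I would invoke the classical Kato-type well-posedness result in $L^p$: for any $p>3$ and any divergence-free $v_0 \in L^p(\mathbb{R}^3)$, the Navier-Stokes equations admit a unique mild solution on $[0,T^*]$ with $T^* \gtrsim \|v_0\|_p^{-2p/(p-3)}$, which is smooth on $(0,T^*]$ by parabolic bootstrapping. The crucial feature is that $T^*$ depends only on the $L^p$ norm of $v_0$ and not on any higher regularity; this is precisely the sense in which $p>3$ is subcritical, as can be verified from the scaling $\lambda^{(p-3)/p}$ of $\|\cdot\|_p$. In particular, there is a uniform lower bound $T^*(M)>0$ for any datum with norm bounded by $M$.

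Finally, I would pick $t_0 \in (\beta-\eta,\beta)$ so close to $\beta$ that $t_0 + T^*(M) > \beta + \epsilon$ for some $\epsilon>0$, and launch the mild solution $\tilde u$ from $u(t_0)$; it is a strong solution on $[t_0,\beta+\epsilon]$ and lies in the Leray-Hopf class on that interval. Weak-strong uniqueness then forces $u \equiv \tilde u$ on the common interval of existence, extending regularity of $u$ to $[\alpha,\beta+\epsilon]$.

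The main — indeed only — technical obstacle is producing a mild solution whose existence time depends on the data only through $\|v_0\|_p$. This is handled by a standard contraction-mapping argument in Kato-type spaces with time weights $t^{\sigma}$, using heat-kernel smoothing estimates $\|e^{t\Delta}\nabla\cdot f\|_p \lesssim t^{-1/2-3/(2q)+3/(2p)} \|f\|_{q}$ applied to the bilinear term $u\otimes u$; the condition $p>3$ is exactly what makes the resulting exponents integrable and the contraction close on an interval whose length is determined solely by $\|v_0\|_p$.
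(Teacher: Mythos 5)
Your proposal is correct, and it is the standard modern proof of this statement; the paper itself offers no proof, citing the result as classical (Leray), so there is nothing to compare against beyond noting that your route --- subcritical Kato-type local well-posedness in $L^p$, $p>3$, with existence time $T^*\gtrsim \|v_0\|_p^{-2p/(p-3)}$ controlled solely by the norm, followed by weak-strong uniqueness --- is exactly the argument the paper implicitly relies on (it later invokes ``local existence and uniqueness theory for Leray-Hopf weak solutions in $L^s$ for $s>3$''). Your scaling exponent is right, and for $p>3$ the bilinear estimate $\|B(u,v)(t)\|_p\lesssim \int_0^t (t-s)^{-\frac12-\frac{3}{2p}}\|u(s)\|_p\|v(s)\|_p\,ds$ already closes the contraction in $C([0,T];L^p)$ without time weights, since $\frac12-\frac{3}{2p}>0$. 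Two small points you should make explicit: (i) weak-strong uniqueness requires the weak solution to satisfy the energy inequality from the chosen initial time $t_0$, which per the paper's definition holds only for a.e.\ $t_0$ --- here this is harmless because $u$ is regular (hence satisfies the energy equality) on $[\alpha,\beta)$, but the choice of $t_0$ should be justified on those grounds; and (ii) the mild solution launched from $u(t_0)\in L^2\cap L^p$ must be checked to lie in the energy class (so that weak-strong uniqueness applies to the pair), which follows from propagation of the $L^2$ norm under the mild-solution iteration together with smoothness for $t>t_0$. Neither point is a gap, only a sentence each to add.
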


\subsection{Littlewood-Paley decomposition}
We introduce a standard Littlewood-Paley decomposition. For a more detailed account of the Littlewood-Paley theory, we refer to \cite{Ca}.

Let $\chi : \mathbb{R}^+ \rightarrow \mathbb{R}$ be a smooth function so that $\chi(\xi) =1$ for $\xi \leq \frac{3}{4}$, and $\chi(\xi) =0$ for $\xi \geq 1$. We further define $\varphi(\xi)=\chi(\lambda_1^{-1 }\xi) -\varphi(\xi) $  and $\varphi_q(\xi) = \varphi(\lambda_q^{-1}\xi)$.For a tempered distribution vector field $u$ let us denote
\begin{align*}
u_q = \mathcal{F}^{-1}(\varphi_q)*u \qquad \text{for } q>-1  \qquad u_{-1} = u_q = \mathcal{F}^{-1}(\chi)*u,
\end{align*}
where $\mathcal{F}$ is the Fourier transform. With this we have $u=\sum_{q \geq -1}u_q$ in the sense of distribution.

Also let us finally note that the Besov space $B^{s}_{p,q}$ is the space consisting of all tempered distributions $u$ satisfying
$$
\| u\|_{B^{s}_{p,q}}:=   \big\| \lambda_r^s \|u_q  \|_p \big\|_{l^q} < \infty.
$$

Finally let us recall the following version of Bernstein's inequality.
\begin{lemma}
Let $u$ be a tempered distribution in $\mathbb{R}^n$, and $r \geq s \geq 1$. Then for any $q\geq -1$ we have that
$$
\| u_q\|_r \lesssim \lambda_q^{n(\frac{1}{s} -\frac{1}{r})} \| u_q\|_r .
$$
\end{lemma}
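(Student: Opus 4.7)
The plan is to establish Bernstein's inequality via Young's convolution inequality, exploiting the compact frequency support of $u_q$. (I read the right-hand norm as $\|u_q\|_s$, since with $\|u_q\|_r$ on both sides the statement is vacuous given $r\ge s$.) The essential observation is that $\varphi_q$ is supported in an annulus $\{|\xi|\sim \lambda_q\}$, so one can construct a slightly fattened radial cutoff $\tilde\varphi\in C_c^\infty$ which is identically $1$ on the support of $\varphi$ and still supported in a larger annulus. Setting $\tilde\varphi_q(\xi)=\tilde\varphi(\lambda_q^{-1}\xi)$ one has $\tilde\varphi_q\,\varphi_q=\varphi_q$, whence
$$
u_q = \mathcal{F}^{-1}(\tilde\varphi_q)\ast u_q.
$$

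Next I would apply Young's convolution inequality with exponents chosen so that $1 + \tfrac{1}{r} = \tfrac{1}{s} + \tfrac{1}{t}$, namely $\tfrac{1}{t} = 1 - \bigl(\tfrac{1}{s}-\tfrac{1}{r}\bigr)\in[0,1]$ (which is well-defined precisely because $r\ge s\ge 1$). This yields
$$
\|u_q\|_r \le \|\mathcal{F}^{-1}(\tilde\varphi_q)\|_t\,\|u_q\|_s.
$$
The final step is a rescaling computation. Letting $\psi := \mathcal{F}^{-1}(\tilde\varphi)$, which is Schwartz, one has $\mathcal{F}^{-1}(\tilde\varphi_q)(x)=\lambda_q^{n}\,\psi(\lambda_q x)$, so a change of variables gives $\|\mathcal{F}^{-1}(\tilde\varphi_q)\|_t = \lambda_q^{n(1-1/t)}\|\psi\|_t = \lambda_q^{n(1/s-1/r)}\|\psi\|_t$, with $\|\psi\|_t<\infty$ providing the implicit constant.

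The case $q=-1$ is handled identically, using a fattened version of $\chi$ in place of $\tilde\varphi$, since $\chi$ is compactly supported in a ball. Conceptually there is no real obstacle here: the whole argument rests on (i) the observation that frequency-localized distributions are convolutions against Schwartz kernels with controlled dyadic scaling, and (ii) the scaling identity for $L^t$ norms under dilation. The only mild subtlety worth flagging is the construction of the fattened cutoff $\tilde\varphi$ with $\tilde\varphi\equiv 1$ on $\operatorname{supp}\varphi$, but that is routine from the explicit choice of $\varphi$ given in the text.
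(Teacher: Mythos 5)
Your proof is correct, and you rightly diagnosed the typo in the statement (the right-hand side should read $\|u_q\|_s$). The paper gives no proof of this lemma, simply recalling it as a standard fact from the Littlewood--Paley literature (the reference [Ca]), and your argument via a fattened annular cutoff, Young's convolution inequality with $1+\tfrac1r=\tfrac1s+\tfrac1t$, and the dilation scaling of the Schwartz kernel is precisely the canonical proof of Bernstein's inequality found there.
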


\section{Regularity away from $\mathcal{S}_T$}
In
\subsection{Definition of singular points}

Recall that $I_p(t) = [t-\lambda_p^{-2},t]$ is the dyadic time interval for any $p \in \mathbb{N}$ .

\begin{definition}
Let $ s > 3$, $r> 2 $ and $u$ be a Leray-Hopf weak solution to \eqref{3D_NSE} on $[0,T]$. For any $p \in \mathbb{N}$ a point $t_0 \subset (0,T]$ is said to be a bad point if 
\begin{equation}\label{bad_points}
\limsup_{p\to \infty}\lambda_p^{  r( \frac{3}{s} + \frac{2}{r}  -1 )} \int_{I_p(t_0)}  \sum_{q\geq p-2}   \| u_q (t) \|_s^r dt \geq \delta^r  ,
\end{equation}
where the constant $\delta $ is defined in Theorem \ref{step2_theorem}.
\end{definition}

The values of $\alpha$, $s$ and $r$ are fixed throughout the note so that there is no confusion in the above definition. Denote $\mathcal{S}_{T}$ the union of bad points on $[0,T]$. A simple covering argument shows the following fractal bound for the set $\mathcal{S}_{T}$.

\begin{lemma}\label{fratals_singualr_set}
Let $u\in L^r_t B^\alpha_{s,\infty}$ for some $\alpha>0$, $ s > 3$ and $r> 2 $. Then 
$$
\mathcal{H}^{\frac{r}{2}(\frac{3}{s} + \frac{2}{r} -\alpha-1 )}(\mathcal{S}_{T})=0.
$$ 
\end{lemma}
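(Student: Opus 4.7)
The plan is to reduce the bad-point condition to an $L^1$ statement about a scalar function and then apply a Vitali-type covering argument. First, since $u \in L^r_t B^\alpha_{s,\infty}$ with $\alpha > 0$, the Besov definition yields $\|u_q(t)\|_s \lesssim \lambda_q^{-\alpha}\|u(t)\|_{B^\alpha_{s,\infty}}$, so summing the geometric series inside \eqref{bad_points} gives
\[
\limsup_{p\to\infty}\lambda_p^{\beta}\int_{I_p(t_0)} g(t)\,dt \gtrsim \delta^r,
\]
where $g(t):= \|u(t)\|_{B^\alpha_{s,\infty}}^r \in L^1(0,T)$ and $\beta := r(\tfrac{3}{s}+\tfrac{2}{r}-\alpha-1)$. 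Writing $\gamma := \beta/2$ for the target Hausdorff exponent and using $|I_p(t_0)| = \lambda_p^{-2}$, the bad condition now reads: $t_0$ is covered by arbitrarily short dyadic intervals $I$ with $\int_I g\,dt \gtrsim \delta^r |I|^\gamma$.

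Next I would verify that $\mathcal{S}_T$ has Lebesgue measure zero. At every Lebesgue point $t_0$ of $g$,
\[
\lambda_p^{\beta}\int_{I_p(t_0)} g\,dt = g(t_0)\lambda_p^{\beta-2} + o(\lambda_p^{\beta-2}),
\]
which tends to $0$ as long as $\beta<2$ (the only range of interest, since $\gamma\geq 1$ on the real line forces the conclusion trivially after disposing of the measure-zero question). Hence no Lebesgue point of $g$ can be a bad point, so $\mathcal{S}_T$ is contained in the exceptional set of the Lebesgue differentiation theorem and has measure zero.

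The final step marries a Vitali covering to the absolute continuity of Lebesgue integration. For arbitrary $\eta>0$, pick an open $U\supset \mathcal{S}_T$ with $\int_U g\,dt < \eta$, which exists because $|\mathcal{S}_T|=0$ and $g\in L^1$. For each $t_0\in\mathcal{S}_T$, select $p=p(t_0)$ large enough that $I_p(t_0)\subset U$ and the bad inequality holds; the resulting family of closed intervals admits, by the $5r$-covering lemma, a countable disjoint subfamily $\{I_{p_k}(t_k)\}$ with $\mathcal{S}_T \subset \bigcup_k 5\, I_{p_k}(t_k)$. Then
\[
\sum_k |5\,I_{p_k}(t_k)|^{\gamma} = 5^\gamma\sum_k \lambda_{p_k}^{-\beta} \lesssim \delta^{-r}\sum_k \int_{I_{p_k}(t_k)} g\,dt \leq \delta^{-r}\int_U g\,dt < \delta^{-r}\eta,
\]
and since the covering intervals can be taken of diameter below any preassigned threshold and $\eta$ is arbitrary, $\mathcal{H}^\gamma(\mathcal{S}_T) = 0$.

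The main obstacle is precisely the pincer in the Vitali step: one must simultaneously compare $\sum_k |I_{p_k}|^\gamma$ to the Lebesgue integral $\int g\,dt$ (supplied by the bad-point inequality) and confine the extracted intervals to a prescribed open set of small $g$-mass. Both requirements hinge on first proving $|\mathcal{S}_T|=0$, which is why the Lebesgue-differentiation argument must precede the covering stage; apart from this, the remainder is routine measure theory with no dependence on the Navier--Stokes structure.
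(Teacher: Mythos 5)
Your proof is correct and follows essentially the same route as the paper: the tail of the Littlewood--Paley sum is controlled by $\lambda_p^{-r\alpha}\|u(t)\|_{B^\alpha_{s,\infty}}^r$ using $\alpha>0$ (the paper's ``Jensen'' step), a Vitali $5r$-covering of $\mathcal{S}_T$ by dyadic intervals satisfying the bad-point inequality is extracted, and absolute continuity of the integral of an $L^1$ majorant closes the estimate. Your intermediate step showing $|\mathcal{S}_T|=0$ via Lebesgue differentiation is a welcome addition that the paper leaves implicit when it invokes absolute continuity --- it is exactly what lets you confine the disjoint intervals to a set of small $g$-mass --- and it covers the whole regime $\gamma<1$ in which the lemma has content.
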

\begin{proof}
We observe that thanks to Vitali lemma for each $p\in \mathbb{N}$, $\mathcal{S}_T$ can be covered by finitely many $5I_{p_i}(t_i)$ with $I_{p_i}(t_i)$ being disjoint and $p_i \geq p $ such that
\begin{align}\label{eq:p_i}
\int_{I_{p_i}(t_i)}  \sum_{q\geq p_i-2}   \| u_q (t) \|_s^r dt \geq \delta^r \lambda_{p_i}^{ - r( \frac{3}{s} + \frac{2}{r}  -1 )} .
\end{align}

Since $\alpha>0$, by Jensen's inequality we have that
$$
\int_{I_{p_i}(t_i)}  \sum_{r\geq p_i-2}   \| u_q (t) \|_s^r   dt \lesssim \lambda_{p_i}^{-r\alpha} \int_{I_{p_i}(t_i)}  \sup_{q\geq p_i-2} \lambda_q^{r\alpha}  \| u_q (t) \|_s^r   dt 
$$
which together with \eqref{fratals_singualr_set}  implies that
$$
\lambda_{p_i}^{- r( \frac{3}{s} + \frac{2}{r} -\alpha-1 ) } \leq  \int_{I_{p_i}(t_i)}  \sup_{q \geq p_i-2} \lambda_q^{r\alpha}  \| u_q (t) \|_s^r dt  .
$$
As a result, for the covering we can compute
$$
\sum_{p_i} \lambda_{p_i}^{r(\frac{3}{s} + \frac{2}{r} -\alpha-1 )}\lesssim \sum_{p_i}  \int_{I_{p_i}(t_i)}  \sup_{q\geq p_i-2} \lambda_q^{r\alpha}  \| u_q (t) \|_s^r dt \leq \int_{U_p} \sup_{q\geq p-2 } \lambda_q^{r\alpha}  \| u_q (t) \|_s^r  dt
$$
where $U_p$ is the union of $I_{p_i}$'s  .

By using the fact $u \in L^r_t B^\alpha_{s,\infty}$ and the absolute continuity of Lebesgue integral we obtain $\sum_{p_i} \lambda_{p_i}^{- r( \frac{3}{s} + \frac{2}{r} -\alpha-1 ) }$ goes to $0$ as $p\to \infty$.
\end{proof}
The next proposition is the main tool that we will use in proving Theorem \ref{main_thm_crite}.
\begin{prop}\label{blackbox}
Let $u$ be a weak solution to \eqref{3D_NSE} and $s\geq 2$. Then $\|u_q(t) \|_s$ is absolute continuous and for a.e. $t\in [0,T] $
$$
\frac{d}{dt}\|u_q(t) \|_s +c \lambda_q^2 \|u_q(t) \|_s  \lesssim  \sum_{p\leq q} \lambda_p^\frac{3}{s}\|u_p \|_s \sum_{|p-q|\leq 2} \lambda_p \| u_p\|_s  + \lambda_q^{\frac{3}{s}+1}\sum_{p\geq q-2} \|u_p \|_s^2 .
$$
\end{prop}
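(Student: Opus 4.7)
The plan is to localize the equation in frequency, derive an $L^s$-energy identity for each dyadic block $u_q$, and then bound the nonlinear interaction via Bony's paraproduct decomposition. First, apply the $q$-th Littlewood-Paley projector to \eqref{3D_NSE} and commute it with the Leray projector $\mathbb{P}$ (which absorbs the pressure) to obtain
$$
\partial_t u_q - \Delta u_q + \mathbb{P}(u\cdot\nabla u)_q = 0
$$
in the distributional sense. The weak-solution regularity $u\in C_w(L^2)\cap L^2_t H^1$, together with the smoothing action of the frequency projector, puts $\partial_t u_q$ in $L^1_{\mathrm{loc},t}(L^s_x)$ for $s\geq 2$. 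Consequently $t\mapsto \|u_q(t)\|_s^s$ is absolutely continuous and the chain rule
$$
\frac{1}{s}\frac{d}{dt}\|u_q\|_s^s = \int |u_q|^{s-2}u_q\cdot\partial_t u_q\,dx
$$
holds for a.e.\ $t$.

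Since $u_q$ has Fourier support in an annulus $\{|\xi|\sim\lambda_q\}$, the standard frequency-localized positivity lemma gives $\int|u_q|^{s-2}u_q\cdot(-\Delta u_q)\,dx \geq c\lambda_q^2\|u_q\|_s^s$, and combining with H\"older and the $L^s$-boundedness of $\mathbb{P}$ (valid for $1<s<\infty$) yields
$$
\frac{1}{s}\frac{d}{dt}\|u_q\|_s^s + c\lambda_q^2\|u_q\|_s^s \lesssim \|(u\cdot\nabla u)_q\|_s\,\|u_q\|_s^{s-1}.
$$
Dividing by $\|u_q\|_s^{s-1}$, justified by the standard $(\|u_q\|_s^2+\varepsilon)^{1/2}$ regularization near the vanishing set, reduces the matter to bounding $\|(u\cdot\nabla u)_q\|_s$.

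For the nonlinearity I would apply Bony's decomposition
$$
u\cdot\nabla u = \sum_{p}\bigl(u_{\leq p-2}\cdot\nabla u_p + u_p\cdot\nabla u_{\leq p-2}\bigr) + \sum_{|p-p'|\leq 1} u_p\cdot\nabla u_{p'}
$$
and observe that applying the $q$-th projector restricts the paraproducts to $|p-q|\leq 2$ and the remainder to $p,p'\geq q-2$. For the paraproducts, H\"older together with Bernstein $\|u_{p'}\|_\infty\lesssim \lambda_{p'}^{3/s}\|u_{p'}\|_s$ produces a contribution of the form $\bigl(\sum_{p'\leq q}\lambda_{p'}^{3/s}\|u_{p'}\|_s\bigr)\sum_{|p-q|\leq 2}\lambda_p\|u_p\|_s$, matching the first RHS term. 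For the remainder, the divergence-free identity $\nabla\cdot u_p=0$ lets me write $u_p\cdot\nabla u_{p'}=\nabla\cdot(u_p\otimes u_{p'})$; extracting the derivative at cost $\lambda_q$ and then applying Bernstein's embedding $L^{s/2}\hookrightarrow L^s$ at scale $\lambda_q$ (which uses $s\geq 2$) produces $\lambda_q^{1+3/s}\|u_p\|_s\|u_{p'}\|_s$, and summing over $p\geq q-2$ yields the second RHS term.

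The main technical point is justifying the absolute continuity of $t\mapsto\|u_q(t)\|_s^s$ and the chain rule at the level of weak-solution regularity, since no local energy inequality is assumed here; the rest is careful but routine paraproduct bookkeeping together with the frequency-localized dissipation lemma.
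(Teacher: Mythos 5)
Your proposal is correct and follows essentially the same route as the paper's appendix: test with $|u_q|^{s-2}u_q$ (equivalently multiply by $s\mathbb{P}\Delta_q(u_q|u_q|^{s-2})$), invoke the frequency-localized positivity $\int|u_q|^{s-2}u_q\cdot(-\Delta u_q)\,dx\gtrsim\lambda_q^2\|u_q\|_s^s$, and estimate the nonlinearity via Bony's decomposition with Bernstein. The only cosmetic difference is in the remainder term, where you apply Bernstein's $L^{s/2}\to L^s$ embedding to $\Delta_q\nabla\cdot(u_p\otimes u_{p'})$ while the paper integrates by parts and puts $\nabla(u_q|u_q|^{s-2})$ in $L^\infty\times L^{s/(s-2)}$; both yield the same $\lambda_q^{1+3/s}\sum_{p\geq q-2}\|u_p\|_s^2$ bound.
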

\begin{remark}
The proof is just a standard application of the Littlewood-Paley and paraproduct theory. For the sake of completeness we sketch one in the appendix.
\end{remark}

\subsection{Step 1: critical regularity}
The following two results will be used to prove the main theorems. 

\begin{prop}\label{step1_theorem}
Let $ s > 3$ and $r> 2 $. For any $0< \delta<1$, if a Leray-Hopf weak solution verifies the bound
$$
\limsup_{p\to \infty}\lambda_p^{  r( \frac{3}{s} + \frac{2}{r}  -1 )}\int_{I_p(t_0)}  \sum_{q\geq p-2}   \| u_q (t) \|_s^r dt \leq \delta^r  ,
$$
then 
$$
\limsup_{q\to \infty} \lambda_q^{\frac{3}{s}-1}\sup_{ I_q(t_0)}\|u_q \|_s  \lesssim_{s,r}  \delta .
$$
\end{prop}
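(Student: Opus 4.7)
The plan is to convert the time-integrated smallness hypothesis into a pointwise-in-time bound on $\|u_q\|_s$ by running Duhamel's formula on the differential inequality of Proposition~\ref{blackbox} and exploiting parabolic smoothing at the dyadic scale $\lambda_q^{-2}$. First I would reformulate the hypothesis: writing $\gamma := \frac{3}{s} + \frac{2}{r} - 1 > 0$, for every $\varepsilon > 0$ and all sufficiently large $p$,
\[
\sum_{q \ge p-2} \|u_q\|_{L^r_\tau(I_p(t_0); L^s)}^r \;\lesssim\; (\delta + \varepsilon)^r \lambda_p^{-r\gamma},
\]
so in particular $\|u_q\|_{L^r_\tau(I_p; L^s)} \lesssim \delta \lambda_p^{-\gamma}$ for every $q \ge p - 2$.

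Fixing $q$ large and $t \in I_q(t_0)$, Proposition~\ref{blackbox} combined with Duhamel started at any $t_1 \in [t - \lambda_q^{-2}, t]$ gives
\[
\|u_q(t)\|_s \le e^{-c\lambda_q^2(t - t_1)}\|u_q(t_1)\|_s + \int_{t_1}^{t} e^{-c\lambda_q^2(t - \tau)} \bigl(F_q^{(1)}(\tau) + F_q^{(2)}(\tau)\bigr)\, d\tau,
\]
where $F_q^{(1)}, F_q^{(2)}$ denote the two nonlinear terms on the right of Proposition~\ref{blackbox}. Averaging this in $t_1 \in [t - \lambda_q^{-2}, t]$ and applying H\"older in time controls the initial-data contribution by $\lambda_q^{2/r}\|u_q\|_{L^r_\tau([t - \lambda_q^{-2}, t]; L^s)}$, which by the reformulated hypothesis is $\lesssim \delta\lambda_q^{2/r - \gamma} = \delta\lambda_q^{1 - 3/s}$, exactly the target scaling.

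For the nonlinear contribution I would H\"older the time convolution against the heat kernel as $L^{(r/2)'}_\tau \cdot L^{r/2}_\tau$, gaining the smoothing factor $\lambda_q^{-2(r-2)/r}$. On the paraproduct piece $F_q^{(1)} = \bigl(\sum_{p \le q}\lambda_p^{3/s}\|u_p\|_s\bigr)\bigl(\sum_{|p - q|\le 2}\lambda_p\|u_p\|_s\bigr)$, splitting the product as $\|\cdot\|_{L^r_\tau}\cdot\|\cdot\|_{L^r_\tau}$ and invoking the reformulated hypothesis on each factor reduces matters to the geometric series $\sum_{p \le q}\lambda_p^{3/s - \gamma} = \sum_{p \le q}\lambda_p^{1 - 2/r}$, which is dominated by its top term $\lambda_q^{1 - 2/r}$ precisely because $r > 2$; a short scaling check shows the exponents collapse to give a $C\delta^2\lambda_q^{1 - 3/s}$ contribution, subordinate to the linear one for small $\delta$. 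The finitely many very low frequencies $p < Q_0$ at which the reformulated hypothesis is not yet effective produce only a lower-order $O(\lambda_q^{-2/r})$ correction via crude Bernstein bounds and drop out in the $\limsup$.

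The main obstacle is the high--high term $F_q^{(2)} = \lambda_q^{3/s + 1}\sum_{p \ge q - 2}\|u_p\|_s^2$: the hypothesis supplies $\ell^r_p$ control of $(\|u_p\|_{L^r_\tau})_{p \ge q-2}$, whereas summing squares formally asks for $\ell^2_p$, and $\ell^r \not\hookrightarrow \ell^2$ on an infinite index set once $r > 2$. I would handle this by decomposing $I_q = \bigsqcup_{j \ge q} J_j$ with $J_j := I_j(t_0)\setminus I_{j+1}(t_0)$, applying the hypothesis at each scale $j$ to get $\sum_{p \ge j - 2}\|u_p\|_{L^r(J_j; L^s)}^r \lesssim \delta^r \lambda_j^{-r\gamma}$, interchanging the summations over $j$ and $p$, and taming the very-high-frequency tail $p \gg q$ by combining Bernstein with the Leray--Hopf bound $u \in L^2_t \dot H^1_x$. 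Carrying this through yields once again a contribution of size $C\delta^2\lambda_q^{1 - 3/s}$; collecting all three pieces gives $\|u_q(t)\|_s \le C(\delta + \delta^2)\lambda_q^{1 - 3/s}$ uniformly in $t \in I_q(t_0)$ for $q$ large, and the conclusion follows upon multiplying by $\lambda_q^{3/s - 1}$ and passing to $\limsup_q$.
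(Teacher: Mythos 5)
Your overall strategy is the same as the paper's: feed the hypothesis into Proposition~\ref{blackbox}, use the dissipation at the dyadic scale $\lambda_q^{-2}$ to upgrade $L^r_t$ smallness to an $L^\infty_t$ bound with the critical scaling $\delta\lambda_q^{1-3/s}$, split the low frequencies at a threshold $p_0$ handled by Bernstein and the energy bound, and sum the geometric series $\sum_{p\le q}\lambda_p^{1-2/r}$ using $r>2$. Your Duhamel formula averaged over the starting time $t_1$ is a cosmetic variant of the paper's device (choose a good starting time by the mean value theorem, then integrate $\frac{d}{dt}\|u_q\|_s^r$ up to the time realizing the supremum); your treatment of the linear term and of the paraproduct term $F_q^{(1)}$ is correct, and your observation that the finitely many frequencies below $p_0$ contribute only $o(\lambda_q^{1-3/s})$ and vanish in the $\limsup$ is a clean substitute for the paper's auxiliary index $p_1$.

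The genuine gap is exactly where you flag it: the high--high term $F_q^{(2)}$. After H\"older against the heat kernel you are left with $\sum_{p\ge q-2}\|u_p\|_{L^r_\tau L^s}^2$, an $\ell^2$ sum of quantities controlled only in $\ell^r$ with $r>2$, and your proposed repair does not close. The dyadic decomposition $I_q=\bigsqcup_j J_j$ reproduces the same $\ell^r$-versus-$\ell^2$ mismatch at every scale $j$ (the hypothesis at scale $j$ still only gives $\ell^r$ control of the frequencies $p\ge j-2$ on $J_j$, and it is only available on intervals ending at $t_0$, not on translates), while the Bernstein plus $L^2_t\dot H^1_x$ bound gives $\sum_p\|u_p\|_{L^2_tL^s}^2\lesssim\sum_p\lambda_p^{1-6/s}\|\nabla u_p\|_{L^2_tL^2}^2$, whose tail is not even summable when $s\ge 6$, and which for $3<s<6$ yields a contribution of size $\lambda_q^{2-3/s}\cdot o(1)$ rather than the required $O(\delta\lambda_q^{1-3/s})$ (you would need $\|\nabla u\|_{L^2(I_q;L^2)}^2\lesssim\delta\lambda_q^{-1}$, which the energy class does not provide). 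The paper sidesteps this by never leaving the $L^r$-in-time level: it integrates $\frac{d}{dt}\|u_q\|_s^r$, so the high--high contribution appears as $\int_{I_p}\|u_p\|_s^{r-1}\sum_{p'\ge p-2}\|u_{p'}\|_s^2\,dt$, which it bounds by $\sup_{I_p}\|u_p\|_s\int_{I_p}\sum_{p'\ge p-2}\|u_{p'}\|_s^r\,dt$ --- a quantity the hypothesis controls directly. (That pointwise passage from $\|u_p\|_s^{r-2}\sum_{p'}\|u_{p'}\|_s^2$ to $\sum_{p'}\|u_{p'}\|_s^r$ is itself stated without justification in the paper, so this term deserves care in either formulation; but your Duhamel route, by decoupling the time integral from the frequency sum via H\"older, makes the loss strictly worse and leaves no mechanism to recover it.)
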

\begin{proof}
By the definition of $\limsup$ there exist $p_0$ such that for any $p\geq p_0$ 
\begin{equation}\label{step1_p_0}
 \int_{I_p(t_0)}  \sum_{q\geq p-2}   \| u_q (t) \|_s^r dt \leq 2\delta  \lambda_p^{ - r( \frac{3}{s} + \frac{2}{r}  -1 )}.
\end{equation}
Furthermore, there exists $p_1(p_0,\|u_0 \|_2) > p_0$ such that $\lambda_{p_0}^{\frac{3 }{2} }\|u_0 \|_2  \leq \delta\lambda_{p_1}$. We will show 
$$
\sup_{I_p}\|u_p  \|_s         \lesssim \delta   \lambda_p^{1 - \frac{3 }{s}} \quad \text{for all $p \geq p_1$. }
$$

By the Mean Value Theorem for integrals, there exist $t_p \in I_p(t_0)$ such that
\begin{equation}\label{step1_p_1}
\sum_{q\geq p-2}\| u_q  (t_p) \|_s^r  \lesssim \delta  \lambda_p^{  r  -\frac{3r}{s}     }.
\end{equation}

Since $\|u_p(t) \|_s$ is continuous let $t_{p}^*$ be such that $\|u_p(t_p^*) \|_s=\sup_{I_p(t_0)}\|u_p \|_s$.
By Proposition \ref{blackbox} we integrate from $t_p$ to $t_p^*$ for $ \frac{d}{dt}\|u_p(t) \|_s^r$ to find that
\begin{align*}
\sup_{I_p(t_0)} \|u_p \|_s^r - \|u_p(t_p) \|_s^r + c\int_{t_p}^{t_p^*}\lambda_p^2 \|u_p \|_s^r dt  & \lesssim  \int_{t_p}^{t_p^*}\|u_p \|_s^{r-1} \sum_{p'\leq p} \lambda_{p'}^\frac{3}{s}\|u_{p'} \|_s \sum_{|p'-p|\leq 2} \lambda_{p'}\| u_{p'}\|_s   dt \nonumber \\ 
&+ \int_{t_p}^{t_p^*} \lambda_p^{\frac{3}{s}+1}\|u_p \|_s^{r-1} \sum_{p'\geq p-2} \|u_{p'}\|_s^2   dt.
\end{align*}
We use the triangle inequality to obtain that
\begin{align}
\sup_{I_p } \|u_p \|_s^r  - \|u_p(t_p) \|_s^r  \lesssim  &  \int_{I_p }\lambda_p^2 \|u_p \|_s^r  dt \nonumber \\
&+ \lambda_{p }\sup_{I_p } \|u_p \|_s^{r-1}\int_{I_p } \sum_{p'\leq p} \lambda_{p'}^\frac{3}{s}\|u_{p'} \|_s \sum_{|p'-p|\leq 2} \| u_{p'}\|_s   dt \nonumber \\ 
&+   \lambda_p^{\frac{3}{s}+1}\sup_{I_p } \|u_p \|_s\int_{I_p } \sum_{ {p'}\geq p-2} \|u_{p'}\|_s^r  dt \nonumber \\
& :=A+B+C. \label{eq:abc}
\end{align}
For the first term in \eqref{eq:abc}, we have by H\"older's inequality and the assumption that
\begin{equation}
A \leq  \lambda_p^2 \Big[\int_{I_p } \|u_p \|_s^r dt \Big]^\frac{1}{r} \Big[\int_{I_p }1  dt \Big]^\frac{r-1}{r} \lesssim \delta  \lambda_p^{1- \frac{3}{s}}.
\end{equation}
For the second term in \eqref{eq:abc}, by H\"older's inequality with exponents $(2,r,\frac{2r}{ r-2})$ we need to estimate
\begin{align*}
B \leq \lambda_p  \sup_{I_p } \|u_p \|_s^{r-1}\sum_{p'\leq p}\lambda_{p'}^{\frac{3}{s}}\Big[  \int_{I_p}  \|u_{p'} \|_s^2 dt\Big]^\frac{1}{2} \Big[ \int_{I_p}  \sum_{|p'-p|\leq 2}\| u_{p'}\|_s^r dt \Big]^\frac{1}{r} \Big[ \int_{I_p}  1 dt \Big]^\frac{r-2}{ 2r}  .
\end{align*}
By \eqref{step1_p_0} the above can be bounded as
$$
B \lesssim \delta \sup_{I_p } \|u_p \|_s^{r-1}  \lambda_p^{1 - \frac{3}{s}} \sum_{p'\leq p} \lambda_{p'}^{\frac{3}{s}} \Big[  \int_{I_p} \|u_{p'} \|_s^2 dt\Big]^\frac{1}{2}  .
$$
To bound the above, we split the summation to obtain
\begin{align*}
\sum_{p'\leq p} \lambda_{p'}^{\frac{3}{s}} \Big[  \int_{I_p} \|u_{p'} \|_s^2 dt\Big]^\frac{1}{2}  & \leq   \sum_{p'\leq p_0} \lambda_{p'}^{\frac{3}{s}} \Big[  \int_{I_p} \|u_{p'} \|_s^2 dt\Big]^\frac{1}{2}+\sum_{p_0 < p'\leq p} \lambda_{p'}^{\frac{3}{s}} \Big[  \int_{I_p} \|u_{p'} \|_s^2 dt\Big]^\frac{1}{2} .
\end{align*}

By \eqref{step1_p_0} we know that for any $p' \geq p_0$ the bound $
 \int_{I_{p'}}\|u_{p'} \|_s^r dt \lesssim \delta \lambda_{p'}^{ - r( \frac{3}{s} + \frac{2}{r}  -1 )}$ holds. And thus by H\"older's inequality
\begin{align*}
\sum_{p_0 < p'\leq p} \lambda_{p'}^{\frac{3}{s}} \big[  \int_{I_p} \|u_{p'} \|_s^2 dt\big]^\frac{1}{2}  & \lesssim \sum_{p_0 < p'\leq p} \lambda_{p'}^{\frac{3}{s}} \big[  \int_{I_p} \|u_{p'} \|_s^r dt\big]^\frac{1}{r} \lambda_p^{-1+ \frac{2}{r}}\\
&  \lesssim \delta\sum_{p_0 < p'\leq p} \lambda_{p'}^{1 -\frac{2}{r}  }   \lambda_p^{-1+ \frac{2}{r}}\lesssim \delta
\end{align*}
where we have used $ r >2$.
Using the Bernstein inequality, the energy bound $\|u(t) \|_2 \leq \|u_0 \|_2$ and the definition of $p_1$ we obtain 
$$
\sum_{  p'\leq  p_0 } \lambda_{p'}^{\frac{3}{s}} \Big[  \int_{I_p} \|u_{p'} \|_s^2 dt\Big]^\frac{1}{2} \lesssim \sum_{  p'\leq p_0} \lambda_{p'}^{\frac{3 }{2} } \|u_0 \|_2 \lambda_{p}^{-1} \leq \lambda_{p_0}^{\frac{3 }{2} } \|u_0 \|_2 \lambda_{p_1}^{-1} \leq \delta .
$$

Putting together the split summation we have
$$
\sum_{p'\leq p} \lambda_{p'}^{\frac{3}{s}} \Big[  \int_{I_p} \|u_{p'} \|_s^2 dt\Big]^\frac{1}{2}    \lesssim \delta.
$$

So the term $B$ verifies the bound:
$$
B \lesssim \delta   \sup_{I_p } \|u_p \|_s^{r-1}  \lambda_p^{1 - \frac{3}{s}}    .     
$$
Next, the estimate for the term $C$ directly follows from \eqref{step1_p_0}:
$$ 
C\lesssim   \delta^r  \lambda_p^{r(1 - \frac{3}{s})} \sup_{I_p } \|u_p \|_s. 
$$
 
Putting together \eqref{step1_p_1} and the estimates for $A$, $B$, $C$ we have
$$
\sup_{I_p}\|u_p  \|_s^r         \lesssim \delta^r   \lambda_p^{r1 - \frac{3r }{s}} + \delta   \sup_{I_p } \|u_p \|_s^{r-1}  \lambda_p^{1 - \frac{3}{s}} +\delta^r  \lambda_p^{r(1 - \frac{3}{s})} \sup_{I_p } \|u_p \|_s.
$$

Using for example Young's inequality finishes the proof.
\end{proof}
\subsection{Step 2: Local-in-time regularity}
The regularity in Proposition \ref{step1_theorem} is not enough to obtain the smoothness of $u$. We will close this gap by a continuity argument.

\begin{theorem} \label{step2_theorem}
For any $ s > 3$ and $r> 2 $, there exist a constant $\delta  >0$ with the following property: if a Leray-Hopf weak solution satisfies
$$
\limsup_{p \to \infty} \lambda_p^{  r( \frac{3}{s} + \frac{2}{r}  -1 )} \int_{I_p(t_0)} \sum_{q \geq p-2} \|u_q \|_s^r dt \leq \delta^r  
$$
then there exist a integer $p>0$ and an interval $[\tau_p,t_0]\subset I_{p}(t_0)$ such that 
$$
\sup_{[\tau_p,t_0]}\sum_{q \geq p-2} \|u_q \|_s^r \lesssim \lambda_p^{  r(1- \frac{3}{s}  )}.
$$
\end{theorem}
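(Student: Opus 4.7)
The plan is to run a continuity (bootstrap) argument on the quantity $E_p(t) := \sum_{q\geq p-2}\|u_q(t)\|_s^r$, propagating control forward from a ``good time'' $t_p \in I_p(t_0)$ up to $t_0$ via the evolution inequality in Proposition \ref{blackbox}. The two inputs are the integral smallness at scale $\lambda_p$ provided by the $\limsup$ hypothesis, and the pointwise bound $\|u_q\|_s \lesssim \delta\, \lambda_q^{1-3/s}$ on $I_q(t_0)$ for $q$ large supplied by Proposition \ref{step1_theorem}.

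First I would extract an initial datum via the mean value theorem: the hypothesis gives $\int_{I_p(t_0)} E_p(t)\, dt \lesssim \delta^r \lambda_p^{r(1-3/s)-2}$ for all sufficiently large $p$, hence some $t_p \in I_p(t_0)$ realizes $E_p(t_p) \lesssim \delta^r \lambda_p^{r(1-3/s)}$. Next I would multiply the inequality of Proposition \ref{blackbox} by $\|u_q\|_s^{r-1}$, sum over $q\geq p-2$, and derive a differential inequality of the form
\[
\frac{d}{dt} E_p + c\lambda_p^2 E_p \leq \Phi(t)\, E_p + \Psi(t),
\]
where $\Phi$ and $\Psi$ collect the two nonlinear terms. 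Low-frequency pieces ($q'<p-2$) would be handled by the energy bound $\|u(t)\|_2 \leq \|u_0\|_2$ combined with Bernstein, essentially reproducing the low-frequency treatment in Proposition \ref{step1_theorem}. High-frequency pieces ($q'\geq p-2$) would be absorbed into $E_p$ itself with a prefactor controlled either by the running bootstrap hypothesis or by the Step 1 bound. A careful H\"older splitting in time should produce $\int_{I_p(t_0)}\Phi\, dt \lesssim \delta$ and $\int_{I_p(t_0)}\Psi\, dt \lesssim \delta\, \lambda_p^{r(1-3/s)}$ under the standing assumption $E_p \leq M\delta^r \lambda_p^{r(1-3/s)}$.

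The final step is the continuity argument. Choose a universal constant $M$ large compared to the implicit constants from the previous paragraph, and set
\[
\tau_p := \inf\bigl\{\tau \in [t_p,t_0] : E_p(s) \leq M\delta^r \lambda_p^{r(1-3/s)} \text{ for all } s \in [\tau,t_0]\bigr\}.
\]
Since $\|u_q\|_s$ is absolutely continuous by Proposition \ref{blackbox}, $E_p$ is continuous, so if $\tau_p>t_p$ then $E_p(\tau_p) = M\delta^r \lambda_p^{r(1-3/s)}$. Integrating the differential inequality from $t_p$ to $t$ and applying Gr\"onwall with the small parameter $\delta$ yields $\sup_{[\tau_p,t_0]} E_p \leq (c_0 + c_1\delta)\delta^r \lambda_p^{r(1-3/s)}$; picking $M>2c_0$ and $\delta$ sufficiently small gives a strict improvement and contradicts the definition of $\tau_p$. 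Hence $\tau_p = t_p$, which furnishes the desired subinterval $[\tau_p,t_0]\subset I_p(t_0)$.

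The hard part I anticipate is that the pointwise Step 1 bound $\|u_q\|_s \lesssim \delta\, \lambda_q^{1-3/s}$ is not summable in $q$ once $s>3$ (since then $r(1-3/s)>0$), so it cannot be used na\"ively to estimate $E_p$; rather, the dissipation $c\lambda_p^2 E_p$ and the integral smallness from the hypothesis must together carry the argument. Balancing them against the various scalings in the nonlinear paraproducts, especially the first term on the right of Proposition \ref{blackbox} which carries the dangerous prefactor $\lambda_p \|u_{p'}\|_s$, requires the same $(2,r,\tfrac{2r}{r-2})$ H\"older split used in Step 1, and care must be taken to arrange that every implicit constant is independent of $p$.
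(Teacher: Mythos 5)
Your overall architecture (mean value theorem to find a good initial time, then a continuity argument forward to $t_0$ using Proposition \ref{blackbox} and the Step 1 bound) matches the paper's, but the central analytic step is carried out differently, and as proposed it does not close. You sum the mode-wise inequalities into a single ODE for $E_p=\sum_{q\geq p-2}\|u_q\|_s^r$ with dissipation $c\lambda_p^2E_p$ and a forcing $\Psi$ satisfying $\int_{I_p}\Psi\lesssim\delta\lambda_p^{r(1-3/s)}$. Under the bootstrap hypothesis, the forcing contributed by mode $q$ (both paraproduct terms) is of size $\delta^r\lambda_q^{\frac{3}{s}+1}\lambda_p^{r(1-\frac{3}{s})}$, which grows in $q$ and is \emph{not} summable over $q\geq p-2$; the only quantity that controls it is the mode-local dissipation $\lambda_q^2\|u_q\|_s^r$. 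Once you replace $\sum_q\lambda_q^2\|u_q\|_s^r$ by the much weaker $\lambda_p^2E_p$, this control is lost, and no H\"older splitting in time recovers it (absorbing the mode-$q$ forcing into $\epsilon\lambda_q^2\|u_q\|_s^r$ by Young and then summing what remains requires $\frac{2}{r}+\frac{3}{s}<1$, i.e.\ the subcritical regime, which defeats the purpose). This is precisely the difficulty the paper circumvents by applying Gr\"onwall \emph{separately to each mode} $q$: the asymptotic value forcing/dissipation $=\delta^r\lambda_q^{\frac{3}{s}-1}\lambda_p^{r(1-\frac{3}{s})}$ decays in $q$, and the sum over $q$ is then closed via the index-set decomposition $\mathcal{I}_p$ (modes where the initial datum dominates, controlled by the smallness of $E_p(\tau_p)$, versus modes where the forcing dominates, controlled by summing $\lambda_q^{(\frac{3}{s}-1)\frac{r}{r-1}}$). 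That decomposition is the key idea missing from your plan.

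Two secondary issues. First, even granting your $\Psi$, the bound $\int_{I_p}\Psi\lesssim\delta\,\lambda_p^{r(1-3/s)}$ cannot close a bootstrap at level $M\delta^r\lambda_p^{r(1-3/s)}$, since $\delta\gg\delta^r$ for small $\delta$; every factor in the nonlinear terms must carry its own power of $\delta$ (via the bootstrap, Step 1, or the choice of $p_1$ for the low modes) so that the forcing is homogeneous of degree $r$ in $\delta$. Second, your continuity set is defined backward from $t_0$, which presupposes control at $t_0$; the paper instead uses local well-posedness in $L^s$, $s>3$, to open a nonempty interval forward from $\tau_p$ on which the bound $4\delta^r\lambda_p^{r(1-3/s)}$ holds and then shows the strict improvement to $3\delta^r\lambda_p^{r(1-3/s)}$ at the right endpoint.
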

\begin{proof}
The exact value of $\delta $ will be chosen in the end. 

First of all by Proposition \ref{step1_theorem} there exists $p_0$ such that for any $p \geq p_0$ the following 2 conditions hold
\begin{align}
  \int_{I_p } \sum_{q \geq p-2} \|u_q \|_s^r dt &\leq 2\delta \lambda_p^{  -r( \frac{3}{s} + \frac{2}{r}  -1 )},\label{step2_2conditions}\\
\sup_{ I_p }\|u_p \|_s  &\lesssim \delta \lambda_p^{1-\frac{3}{s} }\label{step2_2conditions_2}.
\end{align}

To handle the low modes errors in the later estimates, we introduce the lower bound $p_1=p_1(p_0,\delta,\|u_0 \|_2) \geq p_0$ so that 
\begin{equation}\label{step2_p1}
\lambda_{p_0}^{\frac{3}{s} +(r-1)(\frac{1}{2}  -\frac{1}{s}) } \|u_0 \|_2^{s-1} \leq \delta^{r-1}  \lambda_{p_1}^{ \frac{3}{s} } \lambda_{p_1}^{(r-1)(1-\frac{3}{s})}.
\end{equation} 

We fix some $p \geq p_1$ and will show the bound $
\sum_{q \geq p-2} \|u_q  \|_s^r \lesssim \delta^r \lambda_p^{r (1-\frac{3}{s} )}
$ holds on some interval up to $t_0$. 

By the first condition \eqref{step2_2conditions} there exists $\tau_p \in [t-\lambda_p^{-2}, t_0)$ such that
\begin{equation}
 \sum_{q \geq p-2} \|u_q(\tau_p) \|_s^r   \leq 2\delta^r  \lambda_p^{r (1-\frac{3}{s} )},
\end{equation}
i.e. the desired bound is satisfied.

By local existence and uniqueness theory for Leray-Hopf weak solutions in $L^s$ for $s>3$ (cf. Theorem \ref{Leray}), there exists an nonempty interval $[\tau_p,t_p]$ on which 
\begin{equation}\label{step2_interval}
\sum_{q \geq p-2} \|u_q \|_s^r   \leq 4\delta^r \lambda_p^{r (1-\frac{3}{s} )}.
\end{equation}
Next we will use a continuity argument to show that if the above inequality holds on the interval $[\tau_p,t_p]$, then $\sum_{q \geq p-2} \|u_q(t_p) \|_s^r   < 3\delta^r \lambda_p^{r (1-\frac{3}{s} )}$.

Consider the equation for $u_q$ on $[\tau_p,t_p]$ for every $q \geq p-2$ in the following form:
\begin{equation}\label{step2_eq_at_breakdown}
\frac{d}{dt}\|u_q \|_s^{r-1}  +c \lambda_q^2 \|u_q \|_s^{r-1}  \lesssim    \lambda_q\sum_{p'\leq q} \lambda_{p'}^{\frac{3}{s}}\|u_{p'} \|_s^{r-1}  \sum_{|p'-q|\leq 2} \| u_{p'}\|_s   + \lambda_q^{\frac{3}{s}+1}\sum_{p' \geq q-2} \|u_{p'} \|_s^{r } .
\end{equation}

We will bound the terms on the right hand side of \eqref{step2_eq_at_breakdown} on the interval $[\tau_p,t_p]$.

For the first term in \eqref{step2_eq_at_breakdown}, we consider the split
$$
\sum_{p'\leq q} \lambda_{p'}^{\frac{3}{s}} \|u_{p'} \|_s^{r-1} \leq \sum_{p-2 \leq p' \leq q} \lambda_{p'}^{\frac{3}{s}} \|u_{p'} \|_s^{r-1} +\sum_{p_0 \leq p'\leq p-2} \lambda_{p'}^{\frac{3}{s}} \|u_{p'} \|_s^{r-1} +\sum_{p'\leq p_0} \lambda_{p'}^{\frac{3}{s}} \|u_{p'} \|_s^{r-1}.
$$

The idea is to bound modes below $p_0$ by energy, modes between $p_0$ and $p-2$ by critical regularity and modes above $p-2$ by our hypothesis.

For the last part the Bernstein inequality, the energy inequality and the definition of $p_0$ and $p_1$ imply that 
$$
\sum_{p'\leq p_0} \lambda_{p'}^{\frac{3}{s}} \|u_{p'} \|_s^{r-1} \lesssim \lambda_{p_0}^{\frac{3}{s} +(r-1)(\frac{1}{2}  -\frac{1}{s}) } \|u_0 \|_2^{s-1} \leq \delta^{r-1}  \lambda_p^{ \frac{3}{s} } \lambda_p^{(r-1)(1-\frac{3}{s})}.
$$
By the Jensen inequality we have 
$$
\sum_{p-2 \leq p' \leq q} \lambda_{p'}^{\frac{3}{s}} \|u_{p'} \|_s^{r-1} \leq \lambda_q^\frac{3}{s} \big[ \sum_{p-2 \leq p' \leq q} \|u_{p'} \|_s^r \big]^\frac{r-1}{r}  \lesssim \delta^{r-1}  \lambda_q^{ \frac{3}{s} } \lambda_p^{(r-1)(1-\frac{3}{s})}
$$ for the first part. 
From \eqref{step2_2conditions_2} it follows that  
$$
\sum_{p_0 \leq p'\leq p-2} \lambda_{p'}^{\frac{3}{s}} \|u_{p'} \|_s^{r-1}  \lesssim  \delta^{r-1}  \lambda_p^{ \frac{3}{s} } \lambda_p^{(r-1)(1-\frac{3}{s})} .
$$

By \eqref{step2_interval} we have $\sum_{|p'-q|\leq 2} \| u_{p'}\|_s \lesssim \delta \lambda_{p}^{1-\frac{3}{s}}$.

Combining these two estimates, the first part of the nonlinear term verifies
\begin{equation}\label{low_part_nonlinear}
\lambda_q\sum_{p'\leq q} \lambda_{p'}^{\frac{3}{s}}\|u_{p'} \|_s^{r-1}  \sum_{|p'-q|\leq 2} \| u_{p'}\|_s^{}  \lesssim \delta^{r } \lambda_q^{ \frac{3}{s}+1 } \lambda_p^{r(1-\frac{3}{s})} .
\end{equation}

For the last term on the right of \eqref{step2_eq_at_breakdown} we once again use \eqref{step2_interval} to obtain
\begin{equation}\label{high_part_nonlinear}
 \lambda_q^{\frac{3}{s}+1}\sum_{p'\geq q-2} \|u_{p'} \|_s^{r }  \lesssim \delta^{r } \lambda_q^{ \frac{3}{s}+1 } \lambda_p^{r( 1-\frac{3}{s})} .
\end{equation}

Putting \eqref{low_part_nonlinear} and \eqref{high_part_nonlinear} together we obtain on $[\tau_p,t_p]$ the differential inequality:
$$
\frac{d}{dt}\|u_q \|_s^{r-1}+c\lambda_q^2 \|u_q \|_s^{r-1} \lesssim \delta^r \lambda_q^{ \frac{3}{s}+1 } \lambda_p^{r( 1-\frac{3}{s})} .
$$
By the Gronwall inequality we have that
\begin{equation}\label{step2_final_gronwall}
\|u_q(t_p) \|_s^{r-1} \leq \|u_q(\tau_p) \|_s^{r-1} e^{-c\lambda_q^2(t-\tau_p)} + C\big[1-e^{-c\lambda_q^2(t-\tau_p)} \big]  \delta^r \lambda_q^{ \frac{3}{s} -1 } \lambda_p^{ r (1-\frac{3}{s})} ,
\end{equation}
for every $q \geq p-2$, where $C>0$ is a   constant depending on $s$ and $r$.

Let $M$ be a sufficiently large constant depending on $s>3$ and $r>2$ so that
$$
\sum_{q\geq p-2 } \big(  \lambda_q^{ \frac{3}{s} -1 } \lambda_p^{r(1-\frac{3}{s})} \big)^{\frac{r}{r-1}} \leq   M  \lambda_p^{ r(1 - \frac{3}{s})}.
$$
We choose $0<\delta \leq \frac{1}{64M C}$ and define the index set $\mathcal{I}_p\subset \mathbb{Z}$ in the following manner:
\begin{equation}\label{step2_indexset}
\mathcal{I}_p : =\{q: q \geq p-2  \text{  and  } \|u_q(\tau_p) \|_s^{r-1} \geq \frac{1}{8M}\delta^{r-1}\lambda_q^{ \frac{3}{s} -1 } \lambda_p^{ r (1-\frac{3}{s})}  \}.
\end{equation}

From this we have the following decomposition:
$$
\sum_{q \geq p-2} \|u_q(t_p) \|_s^r = \sum_{q\in \mathcal{I}_p } \|u_q (t_p)\|_s^r +  \sum_{q\in \mathcal{I}_p^C } \|u_q (t_p) \|_s^r.
$$

On the one hand, for $q \in \mathcal{I}_p $ by \eqref{step2_final_gronwall} and \eqref{step2_indexset} we obtain
\begin{align*}
\|u_q(t_p) \|_s^{r-1} & \leq \|u_q(\tau_p) \|_s^{r-1} e^{-c\lambda_q^2(t_p-\tau_p)} + \frac{1}{8M}\big[1-e^{-c\lambda_q^2(t_p-\tau_p)} \big] \|u_q(\tau_p) \|_s^{r-1} \\
&\leq \frac{9}{8 } \|u_q(\tau_p) \|_s^{r-1}.
\end{align*}

Taking a summation in $\mathcal{I}_p$ yields
$$
\sum_{q\in \mathcal{I}_p } \|u_q (t_p)\|_s^r \leq \frac{9 }{8} \sum_{q\in \mathcal{I}_p } \|u_q (\tau_p)\|_s^r \leq \frac{9 }{4 }\delta^r \lambda_p^{r(1- \frac{3}{s})},
$$
where we have used the fact that $\sum_{q } \|u_q (\tau_p)\|_s^r \leq 2 \delta^r \lambda_p^{r(1- \frac{3}{s})}$.
   
On the other hand, for $q \not\in \mathcal{I}_p $ once again by \eqref{step2_final_gronwall} and \eqref{step2_indexset} we obtain
\begin{align*}
\|u_q(t_p) \|_s^{r-1} & < \frac{1}{8M}\delta^{r-1}\lambda_q^{ \frac{3}{s} -1 } \lambda_p^{ r (1-\frac{3}{s})} e^{-c\lambda_q^2(t_p-\tau_p)} + \frac{1}{64 M}\big[1-e^{-c\lambda_q^2(t_p-\tau_p)} \big]\delta^{r-1} \lambda_q^{ \frac{3}{s} -1 } \lambda_p^{ r (1-\frac{3}{s})} \\
& \leq \frac{9}{64M}\delta^{r-1}\lambda_q^{ \frac{3}{s} -1 } \lambda_p^{ r (1-\frac{3}{s})} .
\end{align*}

Taking a summation in $\mathcal{I}_p^C$ and using the definition of $M$ yield
$$
\sum_{q\in \mathcal{I}_p^C } \|u_q (t_p)\|_s^r \leq \frac{9}{64M}  \delta^{r }\sum_{q\in \mathcal{I}_p^C } \big(  \lambda_q^{ \frac{3}{s} -1} \lambda_p^{r(1-\frac{3}{s})} \big)^{\frac{r}{r-1}}\leq \frac{9}{64 } \delta^r    \lambda_p^{ r(1 - \frac{3}{s})}.
$$

Combining the decomposition it follows that 
$$
\sum_{q \geq p-2} \|u_q(t_p) \|_s^r  < 3\delta^r    \lambda_p^{ r(1 - \frac{3}{s})}.
$$
And hence an iteration of applying local regularity result and the above continuity argument yields the desire bound:
$
\sup_{[\tau_p,t_0]}\sum_{q \geq p-2} \|u_q \|_s^r  \leq 4\delta^r    \lambda_p^{ r(1 - \frac{3}{s})}.
$
\end{proof}

\section{Proof of main results}
Thanks to the above two theorems, we can prove our results stated in the introduction.

\begin{proof}[Proof of Theorem \ref{main_thm}]
By Proposition \ref{step1_theorem} and Theorem \ref{step2_theorem}, we know that if $t_0\not \in \mathcal{S}_T$  there exists a small $\epsilon>0$ such that $u \in L^\infty(t_0-\epsilon,t_0;L^s)$. 

Since $s>3$, the space $  L^\infty(t_0-\epsilon,t_0;L^s)$ is subcritical to the Navier-Stokes
scaling. We can use for instance classical Serrin’s regularity result to bootstrap
arbitrary regularity and obtain $u \in C^\infty((t_0 - \epsilon,t_0 )\times\mathbb{R}^3)$
 
Therefore by local regularity result for Leray-Hopf weak solutions we can assert
$u \in C^\infty((t_0 - \epsilon',t_0 +\epsilon')\times\mathbb{R}^3) $ for some small $\epsilon'>0$.
 
\end{proof}

Theorem \ref{main_thm_crite} follows from Proposition \ref{step1_theorem} and Theorem \ref{step2_theorem} while Theorem \ref{thm:103} is a direct consequence of the embedding $  L^{\frac{10}{3}}   \subset B^0_{\frac{10}{3} , \frac{10}{3}} $ and Theorem \ref{main_thm_crite}.
\appendix
\section{Proof of Proposition \ref{blackbox}}
We only prove the estimates for strong solutions. To prove the validity for general weak solutions one can use \eqref{weak_formulation} in the class of divergence-free Schwartz functions.

Let $\mathbb{P}$ be the Leray projection. Multiplying \eqref{3D_NSE} by $s \mathbb{P}\Delta_q(u_{q}|u_q|^{s-2})$ and integrating in space yields
\begin{equation}
\frac{d}{dt}\|u_q \|^s_s +  s\int  \Delta u_q u_{q}|u_q|^{s-2} dx = -s\int \mathbb{P} \Delta_q(u\cdot \nabla u) u_q |u_q|^{s-2} dx .
\end{equation}
Note that we have used the fact that $\mathbb{P}u_q =u_q$.

It is known that  $\int  \Delta u_q u_{q}|u_q|^{s-2} dx \sim \lambda_q^2 \|u_q \|_s^s$. We also use the following version of paraproduct decomposition:
\begin{equation*}
\Delta_q(u\cdot v) = \sum_{p:|p-q|\leq 2}\Delta_q(u_{\leq p-2 }\cdot v_p) + \sum_{p:|p-q|\leq 2}\Delta_q(u_{p }\cdot v_{\leq p-2}) +\sum_{p:p\geq q-2}\Delta_q(\tilde{u}_{p }\cdot v_p) .
\end{equation*}
From the above two facts it follows that
\begin{equation*}
\frac{d}{dt}\|u_q \|^s_s +   \lambda_q^2 \|u_q \|_s^s \leq I_1+I_2+I_3
\end{equation*}
where 
\begin{equation*}
I_1 \sim  \Big| \int\sum_{p:|p-q|\leq 2}\mathbb{P}\Delta_q(u_{\leq p-2 }\cdot \nabla u_p) u_q |u_q|^{s-2}  dx \Big|,
\end{equation*}
\begin{equation*}
I_2 \sim \Big| \int \sum_{p:|p-q|\leq 2}\mathbb{P}\Delta_q(u_{p }\cdot \nabla u_{\leq p-2}) u_q |u_q|^{s-2}  dx \Big|,
\end{equation*}
and
\begin{equation*}
I_3 \sim \Big| \int \sum_{p:p\geq q-2}\mathbb{P}\Delta_q(\tilde{u}_{p }\cdot \nabla u_p)  u_q |u_q|^{s-2}  dx \Big|.
\end{equation*}

By the H\"older inequality and the boundedness of the operator $\mathbb{P}\Delta_q $ we find:
\begin{equation*}
I_1 \lesssim  \sum_{p:|p-q|\leq 2} \|u_{\leq p-2 }\cdot \nabla u_p\|_{s}   \| u_q\|_s^{s-1}.
\end{equation*}
It can be further bounded by
$$
\lesssim \sum_{p' \leq q}    \|u_{p'} \|_\infty \sum_{p:|p-q|\leq 2}  \| \nabla u_p\|_{s}   \| u_q\|_s^{s-1}.
$$
Thus the Bernstein inequality gives:
\begin{equation}
I_1 \lesssim \sum_{p' \leq q}   \lambda_{p'}^{\frac{3}{s}} \|u_{p'} \|_s \sum_{p:|p-q|\leq 2}  \lambda_p \|   u_p\|_{s}   \| u_q\|_s^{s-1}.
\end{equation}

For the second term $I_2$ the H\"older inequality yields 
\begin{equation*}
I_2 \lesssim \sum_{p:|p-q|\leq 2} \|u_p\|_{s} \| \nabla u_{\leq p-2 }\|_\infty     \| u_q\|_s^{s-1}.
\end{equation*}

The Bernstein inequality now gives:
\begin{equation}
I_2 \lesssim \sum_{p:|p-q|\leq 2} \|u_p\|_{s}  \sum_{p' \leq q} \lambda_{p'}^{\frac{3}{s}+1}\|   u_{p'}\|_s    \| u_q\|_s^{s-1}.
\end{equation}

Finally for the last term $I_3$ we integrate by parts to obtain:
\begin{equation*}
I_3 \lesssim   \Big| \int \sum_{p:p\geq q-2} \mathbb{P}\Delta_q(\tilde{u}_{p }\cdot  u_p)  \nabla( u_q |u_q|^{s-2}) dx \Big|   .
\end{equation*}
Direct computations and the H\"older inequality yield:
\begin{equation*}
I_3 \lesssim      \sum_{p:p\geq q-2} \| \tilde{u}_{p } \otimes  u_p\|_{\frac{s}{2}}  \| \nabla  u_q\|_\infty  \| u_q  \|_s^{s-2}    .
\end{equation*}
So we can obtain the desire bound:
\begin{equation}
I_3 \lesssim     \lambda_q^{1+\frac{3}{s}}  \sum_{p:p\geq q-3} \|  u_{p }\|_s^2      \| u_q  \|_s^{s-1}    .
\end{equation}

Putting the bounds for $I_1$, $I_2$ and $I_3$ together and dividing a common factor $\|u_q \|_s^{s-1}$ we have
\begin{equation*}
\frac{d}{dt}\|u_q(t) \|_s   +\lambda_q^2 \|u_q(t) \|_s   \lesssim  \sum_{p\leq q} \lambda_p^\frac{3}{s}\|u_p \|_s \sum_{|p-q|\leq 2} \lambda_p \| u_p\|_s   + \lambda_q^{\frac{3}{s}+1}\sum_{p\geq q-2} \|u_p \|_s^2 .
\end{equation*}


\begin{thebibliography}{99}


 

\bibitem{BV} 
C. Bjorland, A. Vasseur.
\newblock { Weak in Space, Log in Time Improvement of the Ladyzenskaja-Prodi-Serrin Criteria}.
\newblock {\it  J. Math. Fluid Mech.}, 13(2):259--269, 2011.

\bibitem{Ca}
M. Cannone.
\newblock {Harmonic analysis tools for solving the incompressible Navier-Stokes equations}.
\newblock In {\em Handbook of mathematical fluid dynamics,} Vol. III:161--244, North-Holland, Amsterdam, 2004.

\bibitem{CKN} Caffarelli, L., Kohn, R.-V., Nirenberg, L.,{Partial regularity of suitable weak solutions of the Navier-Stokes
equations}, Comm. Pure Appl. Math., Vol, XXXV (1982), pp. 771--831


 

\bibitem{cs-reg0}
A. Cheskidov and R. Shvydkoy.
\newblock {The Regularity of Weak Solutions of the 3D Navier-Stokes
    Equations in $B^{-1}_{\infty,\infty}$}.
\newblock {\it Arch. Ration. Mech. Anal.} 195: 159--169, 2010.


\bibitem{ESS} L. Escauriaza, G. Seregin, V. \v{S}ver\'ak,  $L^{3,\infty}$ solutions of Navier-Stokes equations and backward uniqueness, Uspekhi Mat. Nauk 58 (2003), no. 2(350), 3–44; translation in Russian Math. Surveys 58 (2003),
no. 2, 211250.



\bibitem{FJNZ}
J. Fan, S. Jiang, G. Nakamura and Y. Zhou.
\newblock {Logarithmically Improved Regularity Criteria for the Navier–Stokes and MHD Equations}.
\newblock {\it J. Math. Fluid Mech.} 13(4), 557--571 , 2011.

 
\bibitem{GKT}
S. Gustafson, K. Kang, and TP Tsai
\newblock {Interior Regularity Criteria for Suitable Weak Solutions of the Navier-Stokes Equations.}
\newblock {\it Comm. Math. Phys.,} 273(1):161--176, 2007. 



\bibitem{LERAY}  Leray, J., \emph{Sur le mouvement d'un liquide visqueux emplissant l'espace}, Acta Math. 63 (1934), pp. 193--248.

\bibitem{LADY}
O. A. Ladayzhenskaya.
\newblock {On uniqueness and smoothness of generalized solutions to the Navier-Stokes equations.}
\newblock {\it Zap, Nauchn. Sem. Leningrad. Otdel. Mat. Inst. Steklov. (LOMI)}, 5, pp. 60--66.








\bibitem{SERR} J. Serrin, On the interior regularity of weak solutions of the Navier-Stokes equations. Arch. Ration. Mech. Anal. 9, pp. 187--195.


 

\bibitem{STRU} M. Struwe, On partial regularity results for the Navier-Stokes equations, Comm. Pure. Appl. Math. 41, 437--458


\bibitem{SCHE} V. Scheffer, Turbulence and Hausdorff dimension, in Turbulence and the Navier-Stokes Equations,
Lecture Notes in Math., Springer-Verlag, 565 (1976), 94--112.

\bibitem{TAOGR} T. Tao, Global regularity for a logarithmically supercritical hyperdissipative Navier-Stokes equation. Analysis \& PDE 3 (2009), 361--366.

\end{thebibliography}
\end{document}